\def\mytitle{A generalized Bellman-Ford Algorithm\texorpdfstring{\\}~for Application in Symbolic Optimal Control}
\def\myname{Alexander Weber, Marcus Kreuzer and Alexander Knoll}
\def\mykeywords{Symbolic Optimal Control, Bellman-Ford Algorithm, Discrete Abstractions, High-Performance Computing, Aerial Firefighting}
\def\arxiv{0}
\algnewcommand\algorithmicinput{\textbf{Input:}}
\algnewcommand\Input{\item[\algorithmicinput]}
\newtheoremstyle{rem}{\topsep}{\topsep}{\normalfont}{0pt}{\bfseries}{.}{ }{\thmname{#1 }\thmnumber{#2}\thmnote{ \textup{(#3)}}}
\newtheorem{definition}{Definition}[section]
\newtheorem{theorem}{Theorem}[section]
\theoremstyle{rem}
\newtheorem{example}[definition]{\pushQED{\qed}Example}
\def\endexample{\popQED\@endtheorem}
\def\begriff#1%
\else\textit{#1}\fi}
\def\intcc#1{\ensuremath{\left[#1\right]}}
\def\powerset#1%
\newcommand{\R}{\mathbb{R}}
\newcommand{\Frontier}{\mathcal{F}}
\def\implies{\relax\ifmmode\mathrel{\Rightarrow}\else$\implies$ \fi}
\DeclareSIUnit{\rpm}{RPM}
\DeclareSIUnit{\degree}{^\circ}
\DeclareSIUnit{\vel}{\unitfrac{\meter}{\second}}
\DeclareSIUnit{\dens}{\unitfrac{\kilogram}{\meter\cubic}}
\title{\bf \LARGE \mytitle}
\author{
\myname
\thanks{
The authors are with the
Munich University of Applied Sciences,
Dept. of Mechanical, Automotive and Aeronautical Eng.,
80335 M\"unchen, Germany. 
}% 
\thanks{This work has been supported by the German Federal Ministry of Education and Research (Project ARCUS).}%
}%
\begin{document}
\maketitle

\begin{abstract}
Symbolic controller synthesis is a fully-automated and correct-by-design synthesis scheme whose limitations are its immense memory 
and runtime requirements. 
A current trend to compensate for this downside 
is to develop techniques for parallel execution 
of the scheme both in mathematical foundation and 
in software implementation. 
In this paper we present 
a generalized Bellman-Ford algorithm to be 
used in the so-called symbolic optimal control, 
which is an extension of 
the aforementioned synthesis scheme. 
Compared to the widely used 
Dijkstra algorithm our algorithm has two advantages. 
It allows for 
cost functions taking arbitrary (e.g. negative) values 
and for
parallel execution 
with the ability for trading processing speed for 
memory consumption.
We motivate the usefulness 
of negative cost values 
on a scenario of 
aerial firefighting 
with unmanned aerial vehicles. 
In addition, this four-dimensional numerical 
example, which is rich in detail, demonstrates 
the great performance of our algorithm.
\end{abstract}
\section{Introduction}
\ifx\arxiv\undefined
\else
\thispagestyle{fancy}
\fi
Symbolic controller synthesis is attracting 
considerable attention in the last two decades, 
see 
\cite{Tabuada09,
ReissigWeberRungger17,
KhaledZamani19,
MacoveiciucReissig19} 
and the many references in these works. 
This synthesis scheme takes 
a plant and a control specification formulation 
as input and attempts to solve 
the resulting control problem algorithmically 
without an intermediate intervention 
of the engineer. 
In case the process terminates successfully, 
a controller is returned possessing 
the formal guarantee that 
the resulting closed loop 
meets the given specification.
A subsequent verification
routine is not required. 
Processable plants are sampled-data control systems 
whose underlying continuous-time dynamics are given by 
nonlinear differential equations/inclusions.
Specifications can be in principle quite arbitrary.  
However, efficient algorithms have been presented 
only for safety, reach-avoid \cite{MalerPnueliSifakis95} 
and GR(1)-specifications \cite{BloemJobstmanPitermanPnueliSaar12}. 
(Applications of these algorithms in 
symbolic control can be also found in 
\cite{Girard11,ReissigWeberRungger17,HsuMajumdarMallikSchmuck18}.)
In addition, the basic theory has been recently extended
to optimal control problems so that 
near-optimal controllers with respect 
to a given non-negative cost function can be synthesized \cite{ReissigRungger13,ReissigRungger18}.
Though making most of 
the empirical synthesis techniques obsolete in principle, 
symbolic controller synthesis 
has not become a standard technique until now. 
The main problem is the ``curse of dimensionality",  
from which this approach suffers.  
I.e. memory consumption and runtime 
are growing exponentially
with increasing state space dimension 
of the given plant. 
The runtime is due to, among other things, 
the solution of initial value problems, 
typically millions
during the first out of 
two steps of the synthesis scheme. 
Saving data generated from these solutions causes
the huge memory consumption. 
The data is then used in the second step, where aforementioned
algorithms may be used to solve an auxiliary discrete problem.
The latter steps classically execute sequentially, 
both in terms of theory and software implementation.
To raise runtime performance methods for parallel execution   
(in theory \cite{RunggerStursberg12,HsuMajumdarMallikSchmuck18,MacoveiciucReissig19} and 
in implementation \cite{KhaledZamani19}) 
have been presented recently.
More concretely, 
the pioneering work \cite{KhaledZamani19} indicates
the potential boost that can be achieved 
by utilizing high-performance computing platforms and
\cite{RunggerStursberg12,HsuMajumdarMallikSchmuck18,MacoveiciucReissig19} 
present theories for concurrent execution 
of the two steps. 

Against this backdrop, 
the contribution of this paper is twofold. 
Firstly,
we extend the class of solvable 
optimal control problems to problems, whose
cost functions take negative values. 
Moreover, the presented algorithm 
allows for efficient implementation by parallelizing both the execution of 
the algorithm itself and the two steps of the synthesis scheme described above. 
In fact, we present a version of the well-known 
Bellman-Ford algorithm \cite{Bellman58,Ford56,Moore59} for directed hypergraphs.
The Bellman-Ford algorithm (on ordinary directed graphs) 
not only applies to negative edge weights; 
In contrast to the Dijkstra algorithm \cite{Dijkstra59}
it also allows parallelization 
relatively easily \cite{BusatoBombieri16}. 
As we will show, the latter properties pass over to our novel variant.
Moreover, we present a method to regulate the memory consumption during execution in the sense that processing speed can be traded for memory consumption. 
We particularly show that our algorithm outperforms 
in a concrete example 
the memory-efficient Dijkstra-like algorithm 
recently proposed in \cite{MacoveiciucReissig19}.

We will motivate the requirement of handling negative edge weights, which arise from arbitrary cost functions, by a practical example: automated aerial firefighting with an unmanned aerial vehicle. 
The firefighting aircraft shall not only reach 
the hot spot as fast as possible
but shall be rewarded for flying over it as long as necessary 
in order to release its firefighting water. 
The existing theory on reach-avoid specifications 
is sufficient to reach the target while optimizing a non-negative cost function but insufficient to formulate a reward mechanism.
We will set up a detailed scenario of aerial firefighting
and thereby show the applicability of our theoretic results and the great performance of our algorithm.\looseness=-1

The rest of the paper includes 
the notation used for presenting our theory 
in Section \ref{s:notation}. 
In Section~\ref{s:optimalcontrol} 
we summarize the existing theory about symbolic optimal control 
such that our main contributions, 
which are included in Section~\ref{s:algorithm}, 
can be presented rigorously. 
The application of our results to sampled-data systems are discussed in Section \ref{s:application} and 
two numerical examples are included in Section~\ref{s:example}. 
A conclusion is given in Section \ref{s:conclusion}.
\section{Notation}
\label{s:notation}
The symbols
$\mathbb{R}$, $\mathbb{Z}$ and $\R_+$, $\mathbb{Z}_+$ stand for the set of real numbers, integers and non-negative reals and integers, respectively. 
The symbol $\emptyset$ denotes the empty set.
For a map $f \colon A \to \mathbb{R}$ and $c \in \mathbb{R}$ the relations $\leq$, $\geq$ hold point-wise, e.g. $f \geq c$ iff $f(a) \geq c$ for all $a \in A$. 
For $f$ and another map $g \colon A \to \mathbb{R}$ we write $f \geq g$ iff $f(a) \geq g(a)$ for all $a\in A$. 
The derivative of a map $f$ with respect to 
the first argument is denoted by $D_1f$. 
For sets $A$ and $B$ we denote 
the set of all functions $A \to B$ by $B^A$. 
A map with domain $A$ and taking values in the powerset of $B$ is denoted by $A \rightrightarrows B$. 
A set-valued map $f \colon A \rightrightarrows B$ is \begriff{strict} iff 
$f(a) \neq \emptyset$ for all $a \in A$. 
The cardinality of the set $A$ is denoted by $|A|$.
\section{The Notion of System and Optimal Control}
\label{s:optimalcontrol}
The class of optimal control problems 
that is considered in this work will 
be formalized in this section. 
To this end, 
we first introduce 
the notions of system, 
closed loop and
controller. Then we define the notion of
optimal control problem. We use herein the concepts introduced in\cite{ReissigWeberRungger17,ReissigRungger18}.

To summarize in advance, 
we investigate subsequently 
a closed loop scheme as depicted in Fig.~\ref{fig:closedloop}: 
The primary controller is to be synthesized such that 
the total cost of the evolution of the closed loop is minimized.
The total cost is obtained 
from accumulating running costs and adding
a terminal cost instantaneously 
when the primary controller hands over 
the control to a secondary controller. 
The hand-over at some finite time is 
mandatory.
(This scenario was rigorously 
defined in \cite{ReissigRungger13}.)
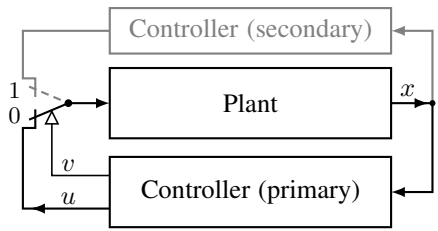
\begin{figure}
\centering
{
\newcommand{\xca}{-3.4}
\newcommand{\yca}{0}
\newcommand{\xcb}{3.4}
\newcommand{\ycb}{1.7}
\newcommand{\xcca}{\xca}
\newcommand{\ycca}{4.2}
\newcommand{\xccb}{\xcb}
\newcommand{\yccb}{5.3}
\newcommand{\xsa}{\xca}
\newcommand{\ysa}{2.15}
\newcommand{\xsb}{\xcb}
\newcommand{\ysb}{3.85}
\newcommand{\ofxa}{2.7}
\newcommand{\ofxb}{1.4}
\newcommand{\ofy}{.5}
\newcommand{\cli}{.4}
\newcommand{\sax}{\xsa-1.4}
\newcommand{\say}{\ysa/2+\ysb/2-.5}
\begin{tikzpicture}[scale=0.55,>=latex]
\draw[thick] (\xsa,\ysa) rectangle node[text width=3cm,align=center]{Plant} (\xsb,\ysb);
\draw[thick] (\xca,\yca) rectangle node[text width=3cm,align=center]{Controller (primary)} (\xcb,\ycb);
\draw[thick,gray] (\xcca,\ycca) rectangle node[text width=4cm,align=center]{Controller (secondary)} (\xccb,\yccb);
\draw[->,thick,gray] (\xsb+1,\ysa/2+\ysb/2) |- (\xccb,\ycca/2+\yccb/2) ;
\draw[->,thick] (\xsb+1,\ysa/2+\ysb/2) |- (\xcb,\yca/2+\ycb/2) ;
\draw[-,thick]  (\xsb+1,\ysa/2+\ysb/2) -- (\xsb,\ysa/2+\ysb/2);
\draw[<-,thick]  (\xsb+.9,\ysa/2+\ysb/2) -- (\xsb+.85,\ysa/2+\ysb/2);
\draw[-,semithick] (\xca,\yca/2+\ycb/2+.4) -| (\sax,\say);
\draw[-,thick]     (\xca,\yca/2+\ycb/2-.4) -| (\xsa-2.1,\ysa/2+\ysb/2-.6) -| (\xsa-1.8,\ysa/2+\ysb/2-.15);
\draw[-,thick,gray] (\xcca,\ycca/2+\yccb/2) -| (\xsa-2.1,\ysa/2+\ysb/2+.6) -| (\xsa-1.8,\ysa/2+\ysb/2+.15);
\draw[->,thick] (\xsa-1,\ysa/2+\ysb/2) -- (\xsa,\ysa/2+\ysb/2);
\draw[-,thick] (\xsa-1.95,\ysa/2+\ysb/2-\cli) -- (\xsa-1,\ysa/2+\ysb/2);
\draw[-,thick,gray,densely dashed] (\xsa-1.95,\ysa/2+\ysb/2+\cli) -- (\xsa-1,\ysa/2+\ysb/2);
\draw[semithick] (-.15+\sax,0+\say) -- (.15+\sax,0+\say) -- (0+\sax,.3+\say) -- cycle;
\draw[fill=black] (\xsa-1,\ysa/2+\ysb/2) circle (.08cm);
\draw[fill=black] (\xsb+1,\ysa/2+\ysb/2) circle (.06cm);
\node at (\xsb+.4,\ysa/2+\ysb/2+.3) {$x$};
\node at (\xca-1,\yca/2+\ycb/2+.65) {$v$};
\node at (\xca-1,\yca/2+\ycb/2-.15) {$u$};
\node at (\xsa-2.3,\ysa/2+\ysb/2-.3) {\small$0$};
\node at (\xsa-2.3,\ysa/2+\ysb/2+.3) {\small$1$};
\draw[->,thick]  (\xca-1.85,\yca/2+\ycb/2-.4) -- (\xca-1.9,\yca/2+\ycb/2-.4);
\end{tikzpicture}
}
\caption{\label{fig:closedloop}Closed loop scheme \cite{ReissigRungger13} investigated in this work.
}
\vspace*{-\baselineskip}
\end{figure}

\subsection{System and Behavior}
In this paper 
we use the following notion of system, 
which is frequently considered in literature 
in the following or similar variants, 
e.g. 
\cite{Girard10,RunggerZamani16,ReissigWeberRungger17}. 
\begin{definition}
A \begriff{system} is a triple
\begin{equation}
\label{e:def:system}
(X, U, F),
\end{equation}
where $X$ and 
$U$ are non-empty sets and 
$F \colon X \times U \rightrightarrows X$ 
is strict. 
\end{definition}
The first two components of a system $S$ in \eqref{e:def:system} 
are called \begriff{state} and \begriff{input space}, respectively. 
The third component, 
the \begriff{transition function}, 
defines a dynamic evolution for the system %
through the difference inclusion
\begin{equation}
\label{e:dynamics:discrete}
x(t+1) \in F(x(t),u(t)).
\end{equation}
For example, a dynamical system obtained by discretizing an ordinary differential equation may be formulated as a system \eqref{e:def:system} with time-discrete dynamics \eqref{e:dynamics:discrete}. (See Section \ref{ss:sampledsystem} for the details.)
The \begriff{behavior of $S$ initialized at $p \in X$}, 
which results from the imposed dynamics, is the set
\begin{equation}
\label{e:behaviour}
\{ (u,x) \in (U\times X)^{\mathbb{Z}_+} \mid p=x(0) \ \wedge \ \forall_{t\in \mathbb{Z}_+}\!\!: \eqref{e:dynamics:discrete} \text{ holds} \}.
\end{equation}
Loosely speaking, the behavior is the set of all input-output signal pairs that can be measured on the lines of the system. Subsequently, we denote \eqref{e:behaviour} by $\mathcal{B}_p(S)$. 
\subsection{Controller and Cost functional}
We investigate
the problem of synthesizing 
an optimal controller with respect to costs as we detail below.
To begin with, by a \begriff{controller} for a system $S$ of the form \eqref{e:def:system} we mean a strict set-valued map 
\[\mu \colon X \rightrightarrows U.\] 
Therefore, controllers in this work are static,
do not block, and 
do not use information from the past. 
By concept, a controller shall restrict 
the behavior of the plant to control. 
This property is 
reflected in our formalism as follows.
We define the \begriff{closed-loop behavior} 
of the controller 
$\mu$ interconnected with $S$ and 
initialized at $p \in X$ by
\begin{equation}
\mathcal{B}_p^\mu(S) = \{ (u,x) \in \mathcal{B}_p(S) \mid \forall_{t\in\mathbb{Z}_+} u(t) \in \mu(x(t))\}.
\end{equation}
Obviously, $\mathcal{B}^\mu_p(S) \subseteq \mathcal{B}_p(S)$, so this formalism is indeed compliant with intuition about controllers.

The objective that we consider 
is to minimize
the cost for operating the closed loop.
Specifically, given a 
\begriff{terminal} and 
\begriff{running cost function} of the form
\begin{subequations}
\label{e:cost}
\begin{align}
G &\colon X \to \mathbb{R} \cup \{ \infty\}, \ \text{ and } \\ 
g &\colon X \times X \times U \to \mathbb{R} \cup \{\infty\},
\end{align}
\end{subequations}
respectively, 
the controller shall minimize the cost functional
\begin{equation*}
J \colon (U \times \{0,1\} \times X)^{\mathbb{Z}_+} \to \mathbb{R} \cup \{-\infty,\infty\} 
\end{equation*}
defined as follows:
\begin{equation}
\label{e:costs}
J(u,v,x) = G(x(T)) + \sum_{t = 0}^{T-1} g(x(t),x(t+1),u(t))
\end{equation}
if 
$T := \inf v^{-1}(1) < \infty$ and 
$J(u,v,x) = \infty$ if $v = 0$. 
In words, $v$ is a boolean-valued signal 
whose first edge from $0$ to $1$ 
defines the termination time $T$ 
of the (primary) controller.
We would like to illustrate the notion of cost 
by the following example. 
It will be also continued later.
\begin{example}
\label{ex:costs}
Let $(X,U,F)$ be a system with seven states and two inputs, 
where $F$ is defined graphically 
in Fig.~\ref{fig:example}. To be specific, 
$X = \{1,2,\ldots,7\}$, $U = \{ \mathrm{b}, \mathrm{g}\}$ and
the \underline{b}lack and \underline{g}ray dashed edges, respectively, 
define the image of $F$ for the input $\mathrm{b}$ and $\mathrm{g}$, respectively. 
E.g., $F(1,\mathrm{b}) = \{1\}$, $F(4,\mathrm{g}) = \{1,2\}$. 
The running cost function $g$ is also defined graphically by the label of each edge. E.g., $g(1,1,\mathrm{b}) = 1$, $g(2,3,\mathrm{g})=-4$. 
The terminal cost function $G$ is defined by $G(x) = x$ for all $x \in X$, i.e. the label of a state in Fig.~\ref{fig:example} 
equals exactly the value of the terminal cost of the state.\\
We consider the state and input signal 
$x = (7,4,3,3,\ldots)$ and 
$u = (\mathrm{b},\mathrm{b},\ldots)$, and the termination signals 
$v_0 := (1,\ldots)$, 
$v_1 := (0,1,\ldots)$ and 
$v_2 := (0,0,1,\ldots)$. 
Then, $J(u,v_0,x) = G(7) = 7$, $J(u,v_1,x) = G(4) + g(7,4,\mathrm{b}) = 4 + 1$ and 
$J(u,v_2,x) = 3 + 1 -2 = 2$.
Analogously, for $y=(7,5,1,1,\ldots)$ it holds 
$J(u,v_1,y) = 6$, $J(u,v_2,y)=1$. 
\end{example}
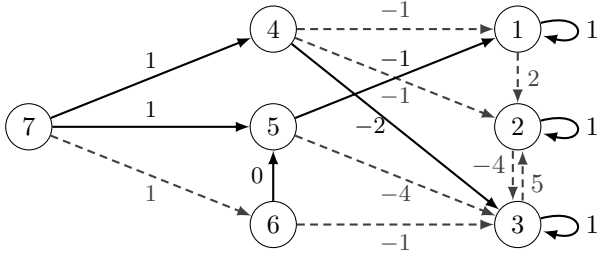
\begin{figure}
\centering
{
\newcommand{\dist}{2.5}
\begin{tikzpicture}[>=latex,scale=1.3]
\node[draw=black,circle,minimum size=.6cm] (p1) at (0,.5)  {$1$};
\node[draw=black,circle,minimum size=.6cm] (p2) at (0,-.5)  {$2$};
\node[] (p2a) at (0+.05,-.65) {};
\node[] (p2b) at (0-.05,-.65) {};
\node[draw=black,circle,minimum size=.6cm] (p3) at (0,-1.5)  {$3$};
\node[] (p3a) at (0+.05,-1.35) {};
\node[] (p3b) at (0-.05,-1.35) {};
\node[darkgray] (p3c) at (0+.2,-1.08) {$5$};
\node[darkgray] (p3d) at (0-.3,-.9) {$-4$};
\node[draw=black,fill=white,circle,minimum size=.6cm] (p4) at (-\dist,.5) {$4$};
\node[draw=black,fill=white,circle,minimum size=.6cm] (p5) at (-\dist,-.5) {$5$};
\node[draw=black,fill=white,circle,minimum size=.6cm] (p6) at (-\dist,-1.5) {$6$};
\node[draw=black,fill=white,circle,minimum size=.6cm] (p7) at (-\dist-\dist,-.5) {$7$};
\draw[->,thick] (p7) to node[above] {\small $1$} (p4) ;
\draw[->,thick] (p7) to node[above] {\small $1$} (p5) ;
\draw[->,thick,darkgray,thick,densely dashed] (p7) to node[below] {\small $1$} (p6) ;
\draw[->,thick] (p4) to node[left] {\small $-2$} (p3) ;
\draw[->,thick,darkgray,densely dashed] (p4) to node[above] {\small $-1$} (p1) ;
\draw[->,thick,darkgray,densely dashed] (p4) to node[below] {\small $-1$} (p2) ;
\draw[->,thick] (p5) to node[above] {\small $-1$} (p1) ;
\draw[->,thick,darkgray,densely dashed] (p5) to node[below] {\small $-4$} (p3) ;
\draw[->,thick] (p6) to node[left] {\small $0$} (p5) ;
\draw[->,thick,darkgray,densely dashed] (p6) to node[below] {\small $-1$} (p3) ;
\draw[->,thick,darkgray,densely dashed] (p1) to node[right] {\small $2$} (p2) ;
\draw[->,thick,darkgray,densely dashed] (p3a) -- (p2a) ;
\draw[->,thick,darkgray,densely dashed] (p2b) -- (p3b) ;
\path[]
    (p1) edge [loop right,thick] node[right] {$1$} (p1)
    (p2) edge [loop right,thick] node[right] {$1$} (p2)
    (p3) edge [loop right,thick] node[right] {$1$} (p3);
\end{tikzpicture}
\vspace*{-.5\baselineskip}
}%
\caption{\label{fig:example}System and cost functions in Examples \ref{ex:costs}, \ref{ex:closedloopperformance} and \ref{ex:algorithm}.}
\vspace*{-1\baselineskip}
\end{figure}
To a controller $\mu$ we associate a 
\begriff{closed-loop performance initialized at $p \in X$}, 
which is the value
\begin{equation}
\label{e:closedloopperformance}
L(p,\mu) = \sup_{(u,x) \in \mathcal{B}_p^\mu(S)}\inf\{ J(u,v,x) \mid v \in \{0,1\}^{\mathbb{Z}_+}\}.
\end{equation}
Roughly speaking, this quantity is the \emph{worst-case} cost for the evolution of the closed loop with controller $\mu$ for the \emph{best} possible hand-over time $T = \inf v^{-1}(1)$.
We would like to illustrate the closed-loop performance of a controller by continuing Example \ref{ex:costs}.
\begin{example}
\label{ex:closedloopperformance}
Let the system $S:=(X,U,F)$, the
cost functions $G$, $g$ and $u$, $v_2$, $x$, $y$ 
be as in Example \ref{ex:costs}. 
We consider the controller 
$\mu \colon X \rightrightarrows U$ 
defined by $\mu(x) = \{\mathrm{b}\}$ for all $x \in X$. 
Then $\mathcal{B}_7^\mu(S) = \{(u,x),(u,y)\}$.
Hence, using the results in Example \ref{ex:costs}  
we conclude that
the closed-loop performance initialized at $7$ 
satisfies $L(7,\mu) = \max\{J(u,v_2,x),J(u,v_2,y)\} = \max\{1,2\} = 2$. 
Note that a termination time greater than $2$ increases the cost by $1$ since $g(1,1,\mathrm{b}) = g(3,3,\mathrm{b}) = 1$.
\end{example}
\subsection{Optimal Control Problem}
The previously defined objects in \eqref{e:def:system} and \eqref{e:cost} 
can be grouped together in a 
compact form \cite{ReissigRungger13}. 
\begin{definition}
\label{def:ocp}
Let $S$ be a system of the form \eqref{e:def:system}.
An \begriff{optimal control problem} (on $S$) 
is a 5-tuple
\begin{equation}
\label{e:def:ocp}
(X,U,F,G,g)
\end{equation}
where $G$ and $g$ are as in \eqref{e:cost}.
\end{definition}
Finally, we relate the solution of an optimal control problem $\Pi$ of the form \eqref{e:def:ocp} to the so-called \begriff{value function}. 
The latter is the map
$V \colon X \to \mathbb{R} \cup \{-\infty,\infty\}$ defined
by
\begin{equation}
\label{e:valuefunction}
V(p) = \inf \{ L(p,\nu) \mid \nu \colon X \rightrightarrows U \text{ strict} \}.
\end{equation}
We say that 
$\mu \colon X \rightrightarrows U$ \begriff{realizes} $V$ 
if $V = L(\cdot, \mu)$. If additionally $V(p)$ is finite for all $p \in A$, where $A \subseteq X$,  
then $\mu$ is \begriff{optimal} for $\Pi$ \begriff{on} $A$. 
(Here, optimal controller and optimal termination
are formally separated, see \eqref{e:closedloopperformance}. However, as we will see later, both come naturally hand in hand.)

The focus of this work is on how to solve 
optimal control problems of the form \eqref{e:def:ocp}
algorithmically. 
Therefore, we shall review some important 
results on optimal control problems. 
In fact, we will use \cite[Th.~IV.2]{Reissig16} 
in the proofs of our main results later. 
It states that the value function is the maximal fixed point of the \begriff{dynamic programming operator}
$P \colon \intcc{-\infty,\infty}^X \to \intcc{-\infty,\infty}^X$ defined by
\begin{equation}
\label{e:dynamicprogramming}
P(W)(x) = \min \left \{ G(x), \inf_{u \in U} \sup_{y \in F(x,u)} g(x,y,u) + W(y) \right \}.
\end{equation}
The precise statement is as follows. 
\begin{theorem}
\label{t:dynamicprogramming}
Let $\Pi$ be an optimal control problem of the form \eqref{e:def:ocp} and 
let $V$ be the value function of $\Pi$ defined in \eqref{e:valuefunction}. 
Then $V$ is the maximal fixed point of the functional defined in \eqref{e:dynamicprogramming}, i.e. $P(V) = V$ and if $W\leq P(W)$ then $W\leq V$ for every $W \in \intcc{-\infty,\infty}^X$.
\end{theorem}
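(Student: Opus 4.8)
The statement bundles two assertions: the fixed-point property $P(V)=V$, and the maximality of $V$ among all $W\in\intcc{-\infty,\infty}^X$ with $W\le P(W)$. This is precisely \cite[Th.~IV.2]{Reissig16}, so the shortest route is to check that the present notions of system, controller and cost functional \eqref{e:costs} instantiate the framework of \cite{Reissig16} and then invoke that theorem; for a self-contained argument I would split the proof into the maximality clause and the two inequalities $P(V)\le V$ and $V\le P(V)$.

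Maximality is the most transparent part. Fix $W$ with $W\le P(W)$, fix $p\in X$ and an arbitrary strict controller $\nu\colon X\rightrightarrows U$; I must show $W(p)\le L(p,\nu)$. Unfolding \eqref{e:dynamicprogramming}, the hypothesis says $W(x)\le G(x)$ and $W(x)\le\sup_{y\in F(x,u)}\bigl(g(x,y,u)+W(y)\bigr)$ for all $x\in X$ and $u\in U$. As $L(p,\nu)$ in \eqref{e:closedloopperformance} is a supremum over $\mathcal B_p^\nu(S)$, it suffices to exhibit, for each $\varepsilon>0$, one trajectory $(u,x)\in\mathcal B_p^\nu(S)$ with $J(u,v,x)\ge W(p)-\varepsilon$ for every $v\in\{0,1\}^{\Z_+}$. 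I would construct $(u,x)$ greedily: set $x(0)=p$, and having reached $x(t)$, pick any $u(t)\in\nu(x(t))$ (nonempty by strictness) and, using the second inequality, some $x(t+1)\in F(x(t),u(t))$ with $g(x(t),x(t+1),u(t))+W(x(t+1))\ge W(x(t))-\varepsilon\,2^{-t-1}$. Induction on $t$ then gives the invariant $W(x(t))+\sum_{s=0}^{t-1}g(x(s),x(s+1),u(s))\ge W(p)-\varepsilon(1-2^{-t})\ge W(p)-\varepsilon$, and combining it at $t=T:=\inf v^{-1}(1)$ with $W(x(T))\le G(x(T))$ yields $J(u,v,x)\ge W(p)-\varepsilon$ (the case $v=0$ being trivial, since then $J=\infty$). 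Letting $\varepsilon\downarrow0$ gives $W(p)\le L(p,\nu)$, and the infimum over $\nu$ then gives $W\le V$ via \eqref{e:valuefunction}.

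The bound $P(V)\le V$ follows by ``trajectory surgery''. Given a strict $\nu$, an element $u_0\in\nu(p)$ and a successor $y\in F(p,u_0)$, any closed-loop trajectory from $y$ that is near-optimal for $L(y,\nu)$ may be prepended with the single move $(u_0,y)$ to obtain a member of $\mathcal B_p^\nu(S)$; splitting the infimum over termination signals into the cases $v(0)=1$ and $v(0)=0$ shows that $L(p,\nu)$ is, up to arbitrarily small slack, at least $\min\bigl\{G(p),\ \sup_{y\in F(p,u_0)}\bigl(g(p,y,u_0)+L(y,\nu)\bigr)\bigr\}$. Since $V\le L(\cdot,\nu)$ and $P(V)(p)\le\min\{G(p),\ \sup_{y\in F(p,u_0)}(g(p,y,u_0)+V(y))\}$, taking the infimum over $\nu$ yields $P(V)\le V$. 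The complementary inequality $V\le P(V)$ splits into $V\le G$ — immediate, because testing the cost functional on the handover signal $v=(1,1,\dots)$ gives $J(u,v,x)=G(p)$ along every trajectory, hence $L(p,\nu)\le G(p)$ for every $\nu$ — and the one-step bound $V(p)\le\inf_{u\in U}\sup_{y\in F(p,u)}\bigl(g(p,y,u)+V(y)\bigr)$.

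This one-step bound is the main obstacle. Because the controllers admitted in \eqref{e:valuefunction} are \emph{static} and \emph{memoryless}, one cannot simply fix $u_0$ at $p$ and thereafter ``switch'' to a controller tailored to the successor state $y$: the near-optimal controllers for different successors generally disagree on shared states, so they need not merge into a single strict map $X\rightrightarrows U$. Resolving this requires the construction in \cite{Reissig16} of a \emph{uniformly} near-optimal memoryless controller (acting greedily with respect to $V$, with the slack made summable along trajectories), after which a verification argument dual to the maximality proof above closes the inequality. The only remaining care concerns extended-real arithmetic in the sums \eqref{e:costs}, which is harmless here since $g$ and $G$ never take the value $-\infty$. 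Together with maximality, the two inequalities establish the theorem.
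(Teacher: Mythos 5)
The paper offers no proof of this theorem at all: it is imported verbatim from \cite[Th.~IV.2]{Reissig16}, so your primary route---checking that the system/controller/cost setup matches that framework and invoking the cited result---is exactly the paper's approach. Your supplementary self-contained sketch is sound where you work it out (the greedy-trajectory argument for maximality, the prepending argument for $P(V)\le V$, and $V\le G$), and you correctly isolate the one-step inequality $V(p)\le\inf_{u}\sup_{y\in F(p,u)}\bigl(g(p,y,u)+V(y)\bigr)$ for \emph{memoryless} controllers as the step that genuinely requires the machinery of the cited work, which you also (legitimately) defer to it.
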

We would like to emphasize 
that the previous setup can be 
easily rephrased to 
the terminology of directed hypergraphs and 
the search for optimal (hyper-)paths. 
In fact, we may identify a system (with the input space being a singleton) 
with a directed hypergraph, 
where every hyperarc points to possibly several vertices \cite{AusielloDAtriSacca86}. 
This is the very reason that 
graph-theoretical algorithms can be used 
as a computational means to obtain controllers. 
Subsequently, we present such an algorithm.
\section{Main Contributions}
\label{s:algorithm}
In this section, we present our main contribution, 
which is an algorithm to determine the value function and 
the realizing controller under weaker assumptions 
than so far known on the given optimal control problem. 
Specifically, 
for the special case 
that terminal and running cost functions 
satisfy $G,g\geq 0$ a generalized 
Dijkstra algorithm was presented in \cite{ReissigRungger18} 
to solve the optimal control problem. 
(Various versions of the Dijkstra algorithm were used also in other works like \cite{GrueneJunge08,RunggerStursberg12,Reissig11,WeberReissig13}.) 
Besides the restriction to non-negative cost functions the Dijkstra algorithm has another disadvantage according to the prevailing opinion in literature: 
it cannot be conveniently parallelized due to the involved priority queue.

The novel algorithm we present 
below does not require the non-negativity of $G$ nor of $g$, and
it can be easily executed in parallel. 
In the special case of ordinary, 
directed graphs 
the novel algorithm reduces to 
the classical Bellman-Ford algorithm \cite{Bellman58}
combined with ideas of Yen \cite{Yen70} and 
Cormen et. al. \cite{CormenLeisersonRivestStein09}.
Using the techniques of the latter works
a memory and time efficient implementation can be realized.
The classical Bellman-Ford algorithm 
can be executed with a high degree of 
parallelism \cite{DavidsonBaxterGarlandOwens14,BusatoBombieri16}, 
and our algorithm inherits this property. 
We discuss implementation details in Section \ref{ss:algorithm:implementation}. 
In Section \ref{ss:algorithm}, 
we present the algorithm and its properties.
\subsection{Algorithm}
\label{ss:algorithm}
In the statement of Algorithm \ref{alg:BellmanFord} the set
\begin{equation}
\label{e:pred}
\operatorname{pred}(x,u) = \{ y \in X \mid x \in F(y,u)\}
\end{equation}
is used, which may be seen as the preimages of $F$ or, 
equivalently, as the predecessors in the hypergraph defined by $F$.
(In \eqref{e:pred}, $F$ and $X$ are as in Algorithm \ref{alg:BellmanFord}.)
\begin{algorithm}
\caption{\label{alg:BellmanFord}Generalized Bellman-Ford-Yen Algorithm}
\begin{algorithmic}[1]
\Input{Optimal control problem $(X,U,F,G,g)$}
\State{$\Frontier_1 \gets \emptyset$\hfill{}{// ``Active" Frontier \cite{CormenLeisersonRivestStein09}}}
\State{$\Frontier_2 \gets \emptyset$\hfill{}{// ``Upcoming" Frontier \cite{CormenLeisersonRivestStein09}}}
\ForAll{$x \in X$}
\State{\label{alg:BF:init:V}$W(x) \gets G(x)$}
\State{$\mu(x) \gets U$}
\If{$G(x) < \infty$}
\State{$\Frontier_1 \gets \Frontier_1 \cup ( \cup_{u \in U } \operatorname{pred}(x,u) )$}
\EndIf{}
\EndFor{\label{alg:init:endfor}}
\State{$i=0$}
\While{\label{alg:BF:while}$\Frontier_1 \neq \emptyset$ \textbf{and} $i < |X|$}
\For {\label{alg:BF:for}$(x,u) \in \Frontier_1 \times U$}
\State{\label{alg:BF:sup}$d \gets \sup_{y \in F(x,u)} g(x,y,u) + W(y) $}
\If{$d < W(x)$}
\State{\label{alg:BF:assign}$W(x) \gets d$}
\State{\label{alg:BF:control}$\mu(x) \gets \{u\}$}
\State{\label{alg:BF:F2}$\Frontier_2 \gets \Frontier_2 \cup ( \cup_{\tilde u \in U } \operatorname{pred}(x,\tilde u) )$}
\EndIf{}
\EndFor{\label{alg:BF:endfor}}
\State {$\Frontier_1 \gets \Frontier_2$\hfill{}{// Swap frontiers}}
\State{$\Frontier_2 \gets \emptyset $}
\State{$i \gets i + 1$}
\EndWhile{}
\State{\Return{$W$, $\mu$}}
\end{algorithmic}
\end{algorithm}
Algorithm~\ref{alg:BellmanFord} basically implements a fixed-point iteration
according to \eqref{e:dynamicprogramming} using additionally some heuristics to improve efficiency, which we adopt from improvements on
the classical Bellman-Ford algorithm \cite{BannisterEppstein12}. 
Firstly, Yen \cite{Yen70} observed that only a certain subset of predecessors 
need to be processed iteratively in the while-loop -- and not all elements of $X$.
Secondly, 
in \cite{CormenLeisersonRivestStein09} the two sets $\Frontier_1$ and 
$\Frontier_2$, which are called \begriff{frontiers}, 
have been introduced 
replacing the queue proposed in \cite{Yen70}.
Lastly, the second condition in line \ref{alg:BF:while} 
implements negative cycle detection. 
\begin{theorem}
\label{t:BellmanFord}
Let $S$ be a system of the form \eqref{e:def:system}, 
where $X$ and $U$ are finite. Let $\Pi$ be an optimal control problem of the form \eqref{e:def:ocp} on $S$. If Algorithm \ref{alg:BellmanFord} terminates with $\Frontier_1 = \emptyset$ then
$W$ equals the value function of $\Pi$ and $\mu$ realizes $W$. 
\end{theorem}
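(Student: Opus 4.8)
The plan is to reduce the statement to Theorem~\ref{t:dynamicprogramming}. Writing $(W,\mu)$ for the output of the algorithm, I would prove two things: (a) that $W\le P(W)$ holds on termination, so that $W\le V$ by the maximality clause of Theorem~\ref{t:dynamicprogramming}; and (b) that $L(\cdot,\mu)\le W$, so that $V\le L(\cdot,\mu)\le W$ by the definition \eqref{e:valuefunction} of $V$ (taking $\nu=\mu$ there). Chained together these give $W\le V\le L(\cdot,\mu)\le W$, hence $W=V=L(\cdot,\mu)$, which is exactly the claim that $W$ is the value function of $\Pi$ and that $\mu$ realizes it.

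For (a) I would first record the loop invariants that $W\le G$ holds throughout ($W$ is set to $G$ in line~\ref{alg:BF:init:V}, and the only later assignment to it, line~\ref{alg:BF:assign}, strictly lowers it) and that $W$ is non-increasing. The essential bookkeeping observation -- the point of Yen's and the frontier heuristics -- is that a vertex $x$ is inserted into a frontier exactly when some value $W(y)$ with $y\in F(x,\tilde u)$ for some $\tilde u$ has just been lowered (line~\ref{alg:BF:F2}) or initialized to a finite value; hence for every pair $(x,u)$ the relaxation in lines~\ref{alg:BF:sup}--\ref{alg:BF:F2} is carried out at least once after the last lowering of any of the values $W(y)$, $y\in F(x,u)$. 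Here the hypothesis $\Frontier_1=\emptyset$ on termination is used to exclude that the loop stops (via $i\ge|X|$) before that relaxation is executed; if no $W(y)$, $y\in F(x,u)$, ever changes, the inequality below is either vacuous (all such $G(y)=\infty$) or already forced after initialization. Consequently, on termination $W(x)\le\sup_{y\in F(x,u)}g(x,y,u)+W(y)$ for all $(x,u)$, i.e. $W\le P(W)$, and Theorem~\ref{t:dynamicprogramming} yields $W\le V$. The same bookkeeping gives the structure of $\mu$: a vertex with $W(x)=G(x)$ -- call it \emph{terminal} -- retains $\mu(x)=U$, while a vertex with $W(x)<G(x)$ -- call it \emph{interior} -- has $\mu(x)=\{u^\ast\}$ a singleton with $W(x)=\sup_{y\in F(x,u^\ast)}g(x,y,u^\ast)+W(y)$ on termination (value and selector were written together in lines~\ref{alg:BF:assign}--\ref{alg:BF:control} for the last time, and afterwards $W(x)$ stays put while the relevant $W(y)$ can only decrease, giving ``$\ge$''; combine with the ``$\le$'' just shown). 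Note interior vertices have finite $W$-value, since $W$, being initialized to $G\in\mathbb{R}\cup\{\infty\}$ and only decreased through suprema of values $g+W(y)$ with $g>-\infty$, never becomes $-\infty$.

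For (b), fix $p\in X$ and $(u,x)\in\mathcal B_p^\mu(S)$, and let $T:=\inf\{t\in\Z_+\mid x(t)\text{ terminal}\}$. If $T<\infty$, then for $t<T$ the vertex $x(t)$ is interior, $u(t)\in\mu(x(t))$ equals the single element of $\mu(x(t))$, and the interior identity gives $W(x(t))\ge g(x(t),x(t+1),u(t))+W(x(t+1))$; telescoping together with $W(x(T))=G(x(T))$ yields $W(p)\ge G(x(T))+\sum_{t<T}g(x(t),x(t+1),u(t))=J(u,v,x)$ for any $v$ with $\inf v^{-1}(1)=T$, hence $\inf_v J(u,v,x)\le W(p)$; taking the supremum over $\mathcal B_p^\mu(S)$ gives $L(p,\mu)\le W(p)$. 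So it remains to show $T<\infty$ always. Suppose not; then every $x(t)$ is interior, hence $W(x(t))$ is finite, and since $X$ is finite some interior vertex $q$ recurs, say at times $t_1<t_2<\cdots$. Telescoping between consecutive returns shows the partial sums $S_j:=\sum_{t<t_j}g(x(t),x(t+1),u(t))$ are non-increasing, and the telescoped bound $W(q)\le W(p)-S_j$ keeps them bounded below; so $S_j$ converges to a finite limit and the cost accumulated between consecutive returns tends to $0$. That cost is $\le 0$ (again by telescoping) and is the cost of a closed walk through interior vertices, which splits into simple cycles each of cost $\le 0$; since there are only finitely many simple cycles, the cost must equal $0$ from some $j$ on, and then the corresponding walk contains a simple cycle of interior vertices of cost exactly $0$.

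The crux is therefore to show that no cost-$0$ simple cycle $q_0\to q_1\to\cdots\to q_{k-1}\to q_0$ of interior vertices, with $q_{i+1}\in F(q_i,\mu(q_i))$, can exist. Here I would argue through the moment at which each $W(q_i)$ attains its final value: this happens at a relaxation of the pair $(q_i,\mu(q_i))$, and since the cycle has cost $0$ the telescoped inequalities are all equalities, $W(q_i)=g(q_i,q_{i+1},\mu(q_i))+W(q_{i+1})$; comparing the supremum $\sup_{y\in F(q_i,\mu(q_i))}g(q_i,y,\mu(q_i))+W(y)$ actually computed at that relaxation with this identity forces $W(q_{i+1})$ to already equal its final value at that moment, i.e. $q_{i+1}$ becomes final strictly before $q_i$. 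Running around the cycle then makes $q_0$ become final strictly before itself, a contradiction. I expect this finalization-order step to be the main obstacle, both because of the supremum over $F(q_i,\mu(q_i))$ (a priori only a maximizer need already be final) and because the right notion of ``the moment of finalization'' has to be pinned down uniformly for both the sequential and the parallel schedules of lines~\ref{alg:BF:for}--\ref{alg:BF:endfor}. Once $T<\infty$ is established, (b) is complete, and combining $L(\cdot,\mu)\le W$ with $W\le V$ from (a) gives $V=W=L(\cdot,\mu)$.
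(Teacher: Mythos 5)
Your part (a) coincides with the paper's own first half (empty frontier at termination gives $W\le P(W)$, hence $W\le V$ by Theorem~\ref{t:dynamicprogramming}), but your part (b) takes a genuinely different and more ambitious route: the paper obtains $W\ge V$ from the invariant that initialization and every assignment in line~\ref{alg:BF:assign} preserve $W\ge V$ (since $V\le G$ and $\sup_{y}g(x,y,u)+W(y)\ge\sup_{y}g(x,y,u)+V(y)\ge P(V)(x)=V(x)$), and it disposes of the statement about $\mu$ with one sentence, whereas you try to prove $L(\cdot,\mu)\le W$ directly. Unfortunately your argument for (b) has a genuine gap: the intermediate claim that every closed-loop trajectory reaches a terminal vertex ($T<\infty$) is false. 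Take $X=\{a,b,c\}$ with a single input $u$, $F(a,u)=\{b,c\}$, $F(b,u)=\{a,c\}$, $F(c,u)=\{c\}$, $g(a,b,u)=g(b,a,u)=-1$, $g(a,c,u)=g(b,c,u)=0$, $g(c,c,u)=1$, and $G(a)=G(b)=1$, $G(c)=0$. The algorithm terminates with $\Frontier_1=\emptyset$ and $W=(0,0,0)$, so $a$ and $b$ are interior in your sense, yet the closed loop admits the trajectory $a,b,a,b,\dots$ that never visits a terminal vertex; the conclusion of the theorem survives there only because the accumulated cost tends to $-\infty$ while $G(a),G(b)$ are finite, so $\inf_v J=-\infty\le W(a)$ for that trajectory --- a case your proof never treats.

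The precise misstep is your assertion that the partial sums $S_j$ are bounded below: telescoping gives $S_j\le W(p)-W(q)$, which is an upper bound, not a lower bound. The $S_j$ are non-increasing along returns and may diverge to $-\infty$ (as in the example, where each return adds $-2$); then no inter-return cost need vanish, no zero-cost simple cycle need exist, and the finalization-order argument never gets started. (That argument itself is sound where it applies, and your worry about the supremum is unfounded: the squeeze $W(q_i)=d\ge g_i+W(q_{i+1})_{\mathrm{then}}\ge g_i+W(q_{i+1})_{\mathrm{final}}=W(q_i)$ forces the realized successor, not merely a maximizer, to be final already.) A correct completion of your route must therefore split cases: if the $S_j$ are bounded below, your convergence/zero-cost-cycle/finalization argument yields the contradiction; if they diverge to $-\infty$, you must instead show $\inf_v J\le W(p)$ directly, which needs some recurring state on the trajectory to have finite terminal cost, and excluding the remaining possibility (an interior cycle all of whose states have $G=\infty$) is exactly the delicate point your sketch does not address --- and which the paper avoids entirely by proving $W\ge V$ via the invariant above and asserting the claim on $\mu$ without proof. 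As written, part (b) does not go through.
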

Before we give the proof of the theorem and discuss the time complexity of the algorithm, we would like to briefly illustrate its execution in a simple example.
\begin{example}
\label{ex:algorithm}
Let $(X,U,F,G,g)$ be the optimal control problem 
defined by means of Example \ref{ex:costs}. 
We apply Algorithm \ref{alg:BellmanFord} to it.
After initialization (line \ref{alg:init:endfor}) 
the frontier $\Frontier_1$ 
equals $X$ as every state is a predecessor
of some state.
Starting the for-loop 
in line \ref{alg:BF:for} with 
$(x,u)=(6,\mathrm{b})$ 
lines \ref{alg:BF:assign}, \ref{alg:BF:control}
imply 
$W(6)=0 + W(5) = 5$ and 
$\mu(6) = \{\mathrm{b}\}$. 
(Note that the processing order for $\Frontier_1$ is irrelevant.)
Then $\Frontier_2 = \operatorname{pred}(6,\mathrm{g}) = \{7\}$ 
in line \ref{alg:BF:F2}.
The next iteration with 
$(x,u) = (6,\mathrm{g})$ results in the changes 
$W(6) = -1 + W(3) = 2$ and 
$\mu(6) = \{\mathrm{g}\}$.  
Executing the for-loop in line \ref{alg:BF:for} again for 
$(x,u)=(4,\mathrm{g})$ yields
$W(4) = -1 + \max\{ W(1) , W(2) \} = -1 + \max\{1,2\} = 1$ as 
$F(4,\mathrm{g}) = \{1,2\}$.
Analogously, we update $W(5) = W(2) = -1$ and 
when exiting the for-loop (line \ref{alg:BF:endfor}) 
all but the state $5$ need to be processed again. 
In fact, $\Frontier_2 = X \setminus \{5\}$ 
as the $W$-values of 
$\{1\} = F(5,\mathrm{b})$ and 
$\{3\} = F(5,\mathrm{g})$ do not change.
The following execution of the while-loop (line \ref{alg:BF:while})
is therefore done with $\Frontier_1 = X \setminus \{5\}$. 
Continuing the execution as illustrated until $\Frontier_1 = \emptyset$ we finally obtain that 
the optimal controller $\mu$ satisfies $\mu(6)=\{\mathrm{b}\}$ and $\mu(s)=\{\mathrm{g}\}$ for $s \in \{2, 4, 5, 7\}$.
For states $1$ and $3$ the image of $\mu$ is $U$, which may be interpreted as the command to hand over control. 
\end{example}
\begin{proof}[Proof of Theorem \ref{t:BellmanFord}]
Denote by $V$ the value function of $\Pi$ and let $P$ be as in \eqref{e:dynamicprogramming}. 
We shall prove $W = V$. 
Assume that the inequality
$\tilde V(x) > P(\tilde V)(x)$ 
holds for some $x \in X$, where $\tilde V$ denotes the intermediate value of $W$ 
at the end of the for-loop in line \ref{alg:BF:endfor} at some iteration. 
Then there exists $u\in U$ such that 
$W(y) < \infty$ for all $y \in F(x,u)$. 
So $x \in \Frontier_2$, which implies $\tilde V(x) = P(\tilde V)(x)$ in the next iteration.
This proves $W \leq P(W)$ and therefore $W \leq V$ by Theorem \ref{t:dynamicprogramming}.
Since $W \geq P(W) \geq P(V) = V$ by lines \ref{alg:BF:init:V}, \ref{alg:BF:assign} 
and Theorem \ref{t:dynamicprogramming} 
we conclude $W \geq V$ and therefore $W = V$. 
The claim on $\mu$ is obvious.
\end{proof}
\begin{theorem}
Algorithm \ref{alg:BellmanFord} 
can be implemented 
to run with time complexity $\mathcal{O}(nm)$, 
where $n = |X|$ and 
$m = \sum_{(x,u) \in X \times U} | F(x,u) |$. 
\end{theorem}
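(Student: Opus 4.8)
The plan is to bound the cost of each of the three nested structures in Algorithm~\ref{alg:BellmanFord} separately and then combine them. First I would observe that the outer \textbf{while}-loop in line~\ref{alg:BF:while} executes at most $|X| = n$ times because of the explicit guard $i < |X|$. Next, inside a single pass of the \textbf{while}-loop, the \textbf{for}-loop in line~\ref{alg:BF:for} ranges over $\Frontier_1 \times U$; since $\Frontier_1 \subseteq X$, this is at most $\sum_{x \in X}|U|$ iterations, but the key point is that the total work is governed not by the number of pairs but by the cost of evaluating line~\ref{alg:BF:sup}, namely $\sup_{y \in F(x,u)} g(x,y,u) + W(y)$, which takes time proportional to $|F(x,u)|$. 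Summing this over all $(x,u) \in X \times U$ gives exactly $m = \sum_{(x,u)\in X\times U}|F(x,u)|$ work per \textbf{while}-iteration for the \textbf{sup} evaluations. Multiplying the $n$ iterations by the $\mathcal{O}(m)$ per-iteration cost yields the claimed $\mathcal{O}(nm)$ bound for this part.

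It then remains to account for the bookkeeping: the frontier updates in line~\ref{alg:BF:F2} and the initialization loop of lines~\ref{alg:BF:init:V}--\ref{alg:init:endfor}. The initialization touches every $x \in X$ and, for those with $G(x) < \infty$, adds $\cup_{u \in U}\operatorname{pred}(x,u)$ to $\Frontier_1$; the total size of all predecessor sets is again $\sum_{(y,u)}|F(y,u)| = m$, so initialization costs $\mathcal{O}(n+m)$. Likewise, each execution of line~\ref{alg:BF:F2} inserts $\cup_{\tilde u \in U}\operatorname{pred}(x,\tilde u)$; across one \textbf{while}-pass the cumulative insertion cost is at most $\sum_{x \in X}\sum_{\tilde u \in U}|\operatorname{pred}(x,\tilde u)| = m$, assuming the frontier is realized as a structure (e.g.\ a boolean membership array over $X$ together with a list) supporting $\mathcal{O}(1)$ insertion and duplicate suppression. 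Over $n$ passes this is $\mathcal{O}(nm)$ as well, and swapping/clearing the frontiers costs $\mathcal{O}(n)$ per pass. Adding the pieces gives $\mathcal{O}(n + m + nm) = \mathcal{O}(nm)$ under the mild and standard assumption that $F$ is stored so that $\operatorname{pred}(x,u)$ and $F(x,u)$ can be enumerated in time linear in their sizes (a one-time $\mathcal{O}(n+m)$ preprocessing pass builds the predecessor lists from $F$).

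The main obstacle, and the step I would be most careful about, is the data-structure accounting for the frontiers: one has to argue that maintaining $\Frontier_1$ and $\Frontier_2$ — including the union operations in lines~7 and~\ref{alg:BF:F2} and the duplicate elimination implicit in treating them as sets — does not inflate the per-pass cost beyond $\mathcal{O}(n+m)$. The clean way is to represent each frontier as a characteristic vector indexed by $X$ (giving $\mathcal{O}(1)$ membership tests and insertions with automatic deduplication) plus an auxiliary list of its current elements for iteration, and to reset it in line~21 by walking that list rather than the whole of $X$; alternatively, as in \cite{CormenLeisersonRivestStein09}, the reset cost is charged against the insertions that populated the frontier. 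Once this is fixed, everything else is a routine summation, and I would simply remark that the $\sup$ in line~\ref{alg:BF:sup} — which is where a hypergraph arc differs from an ordinary edge — is precisely what turns the classical $\mathcal{O}(|V|\,|E|)$ Bellman--Ford bound into $\mathcal{O}(nm)$ with $m$ counting hyperarc multiplicities, so no new idea beyond careful counting is needed.
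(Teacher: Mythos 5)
Your proposal is correct and follows essentially the same decomposition as the paper's (one-sentence) proof: the while-loop runs at most $n$ times by the guard $i<|X|$, and the total work of one pass of the nested for-loop, dominated by the evaluations of the supremum over $F(x,u)$, is $\mathcal{O}(m)$, giving $\mathcal{O}(nm)$ overall. Your additional bookkeeping for the frontiers, predecessor enumeration and initialization is a more careful elaboration of the same argument rather than a different route.
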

\begin{proof}
The while-loop in Algorithm \ref{alg:BellmanFord} is executed at most $n$ times and the nested for-loop at most $m$ times, which proves the claim.
\end{proof}
\subsection{Implementation technique}
\label{ss:algorithm:implementation}
Section \ref{s:algorithm} is concluded with some notes 
on how to implement Algorithm \ref{alg:BellmanFord} efficiently.

The frontiers $\Frontier_1$ and $\Frontier_2$ 
can be realized as FIFO queues 
such that their lengths never exceed $|X|$. 
As for parallelization, 
it is readily seen that the for-loop in lines \ref{alg:BF:for}-\ref{alg:BF:endfor} can be executed in parallel where only the reading (resp. writing) operation on the array $W$ in line \ref{alg:BF:sup} (resp. line \ref{alg:BF:assign}) needs to be thread-safe. 
In this case, every thread uses its own local
frontier $\Frontier_2$ to avoid further communication among the threads.
All local frontiers are finally merged to obtain $\Frontier_1$. 

Memory consumption can be controlled quite easily. 
Firstly, only 
$\Frontier_1$, 
$\Frontier_2$, 
$W$ and 
$\mu$ need 
to reside in memory throughout execution. 
Keeping the images of
$F$ and $\operatorname{pred}$ in memory
throughout execution normally results 
in increased processing speed. 
Reading the data out of memory 
is typically faster than computing the images. 
On the other hand,
these data significantly contribute to the memory consumption.
Therefore, in case of the lack of memory, 
recomputing images of $F$ and $\operatorname{pred}$
allows to continue the execution.
Needless to say, 
the computation method for $F$ and $\operatorname{pred}$ depends
on the representation of the input data. 
\section{Application to Sampled Systems}
\label{s:application}
Within the framework of symbolic optimal control
Algorithm \ref{alg:BellmanFord} can be used
for synthesizing near-optimal controllers for 
sampled-data systems with continuous state space.
Specifically, the sampled version of a dynamical system
with continuous-time continuous-state dynamics
\begin{equation}
\label{e:ode}
\dot x(t) = f(x(t),u(t))
\end{equation}
can be considered. 
In \eqref{e:ode}, 
$f$ is a function 
$\mathbb{R}^n \times \bar U \to \mathbb{R}^n$, where
$\bar U \subseteq \mathbb{R}^m$, 
$n,m \in \mathbb{N}$. 

A brief overview of the synthesis method 
is given below
in preparation
for our experimental 
results in Section \ref{s:example}. 
More concretely, in Section \ref{ss:sampledsystem}
\begriff{sampled systems} are formalized and then
\begriff{discrete abstractions}
are introduced. 
The technique to implement a synthesis algorithm 
for control problems on sampled systems 
is outlined in Section \ref{ss:implementation}. 

A comprehensive discussion of symbolic control 
is beyond the scope 
of this paper. 
The interested reader may refer to 
\cite{ReissigWeberRungger17} for further details.
\subsection{Symbolic near-optimal control of sampled systems}
\label{ss:sampledsystem}
In this work, we assume in \eqref{e:ode} that $f(\cdot,\bar u)$ 
is locally Lipschitz-continuous 
for all $\bar u \in \bar U$ and 
$\operatorname{dom}\varphi = \mathbb{R}_+ \times \mathbb{R}^n \times \bar U$.
Here and subsequently, 
the symbol $\varphi$ is used to denote 
the general solution of \eqref{e:ode}, i.e.
$\varphi(0,x,u) = x$ and 
$D_1\varphi(t,x,u) = f( \varphi(t,x,u), u)$ 
for all $(t,x,u) \in \operatorname{dom}\varphi$.

We may formulate the discretization of \eqref{e:ode} 
with respect to a chosen sampling time 
$\tau$ as below \cite{ReissigWeberRungger17}. 
\begin{definition}
\label{def:sampledsystem}
Let $\tau > 0$. 
A system $S$ of the form \eqref{e:def:system} with 
$X = \mathbb{R}^n$, 
$U = \bar U$ 
is called \begriff{sampled system} associated with $f$ and 
$\tau$ if 
$F(x,u) = \{ \varphi(\tau,x,u) \}$ 
for all $(x, u) \in X \times U$. 
\end{definition}
Given an optimal control problem $\Pi$ of the form \eqref{e:def:ocp},
where $S=(X,U,F)$ is a sampled system,
the theory developed in \cite{ReissigRungger18} implies 
a method to be outlined below 
to compute near-optimal controllers.
Here, we say a controller $\mu$ for $\Pi$ 
is \begriff{near-optimal on $A \subseteq X$}, 
if its closed-loop performance 
\eqref{e:closedloopperformance} is finite on $A$,
i.e. $L(p,\mu) < \infty$ for all $p \in A$. 
The term ``near-optimal" can be indeed 
justified since according to the theory of 
symbolic optimal control \cite{ReissigRungger18} 
the considered synthesis method implies 
a sequence of functions
converging to the 
value function of the problem. 
It is beyond the scope of this paper 
to investigate the convergence properties
in detail.

In the first step of the aforementioned method
a certain auxiliary optimal control problem 
$\Pi' = (X',U',F',G',g')$ is defined.
By virtue of its special properties, 
which we explain later, 
theory ensures that
a near-optimal controller 
for the actual control problem is found
\emph{if} $\Pi'$ can be solved. 
The latter statement is the 
key point of this synthesis method:
$\Pi'$ is chosen having 
\emph{discrete} problem data and is solved in the second step. 
So, algorithms, as the one we have presented, 
can be used to eventually obtain a controller 
for the actual control problem.
The structure of the
controller for the actual problem 
is explained after the discussion of $\Pi'$.

The key object in $\Pi'$ is 
the so-called discrete abstraction.
A discrete abstraction is, roughly speaking, 
an approximation of the system dynamics: 
The continuous state space of the given sampled system
is quantized by means of a cover, 
a subset of the input space is picked and 
transitions between the elements of the cover
for the few chosen inputs ``cover" the transition
function of the sampled system. 
The precise definition of a discrete abstraction is the following \cite{ReissigWeberRungger17}.
\begin{definition}
\label{def:abstraction}
Let $S$ be a sampled system and 
let the system $S' = (X',U',F')$ satisfy:
\begin{enumerate}
\item $X'$ is a cover of $X$ by non-empty sets;
\item $U' \subseteq U$;
\item 
\label{def:abstraction:frr}
Let $\Omega_i \in X$, $i \in \{1,2\}$, $u \in U'$. 
If $\Omega_2 \cap F(\Omega_1,u) \neq \emptyset$ then $\Omega_2 \in F'(\Omega_1,u)$.
\end{enumerate}
Then $S'$ is called a \begriff{discrete abstraction} of $S$. 
\end{definition}

The correctness of previous method is based on the following ``overapproximation" properties of $\Pi'$:
\begin{enumerate}
\item $G(x) \leq G'(\Omega)$, whenever $x \in \Omega \in X'$;
\item $g(x,x',u) \leq g'(\Omega,\Omega',u)$, whenever $u \in U'$, $(x,x') \in (\Omega,\Omega') \in X' \times X'$;
\item The system $(X',U',F')$ is a discrete abstraction of $S$.
\end{enumerate}
The structure of the controller $\mu$ for $S$ is simple: 
It is composed of the optimal controller for $\Pi'$ and 
the quantizer induced by the cover $X'$. 
See Fig.~\ref{fig:controller}. 
\begin{figure}
\centering
\begin{tikzpicture}
\draw[thick]  (-4.5,3.5) rectangle node[text width=3cm,align=center]{$\mu \colon X' \rightrightarrows U'$} (-2,2.75);
\draw[thick]  (-1.25,3.5) rectangle node[text width=3cm,align=center]{$Q \colon\!X\!\rightrightarrows\!X'$} (0.8082,2.7733);
\draw[thick,dashed] (-4.875,2.5) rectangle (1.25,3.75);
\draw[thick,-latex] (-1.25,3.125) -- (-2,3.125);
\draw[very thick,-latex] (-4.5,3.125) -- (-5.475,3.125) -- (-5.475,3.75);
\draw[very thick,-latex]  (1.75,3.625) -- (1.75,3.125) -- (0.8082,3.125);
\node at (-5.825,3.625) {$u$};
\node at (2,3.625) {$x$};
\end{tikzpicture}
\caption{\label{fig:controller}Structure of the controller $X \rightrightarrows U$ 
for optimal control problem $\Pi$ in Section~\ref{ss:sampledsystem}. The controller is the composition of $\mu$ and $Q$, where
$\mu$ is an optimal controller for the auxiliary problem $\Pi'$ and the quantizer $Q$ is given through the property $\Omega \in Q(x) \Leftrightarrow x \in \Omega$.}
\end{figure}
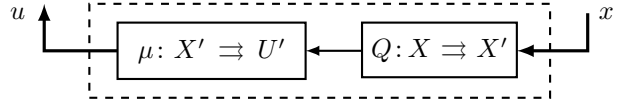
\subsection{Implementation of Symbolic Optimal Control}
\label{ss:implementation}
In order to solve optimal control problems in practice 
sophisticated choices for the ingredients 
of the discrete abstraction are required. 
In this paper, we implement 
the method of \cite{ReissigWeberRungger17,Weber18} 
in the following sense.
Let $S$ be the sampled system associated with $f$ and $\tau$. 
A discrete abstraction $(X',U',F')$ of $S$ 
is constructed as follows.
The set $X'$
consists of a finite number of compact hyper-rectangles and 
a finite number of unbounded sets such that $X'$ is a cover of $X$.
The compact sets in $X'$ are translated copies of 
\begin{equation*}
\intcc{0,\eta_1} \times \dots \times \intcc{0,\eta_n}
\end{equation*}
for some parameter $\eta \in \mathbb{R}^n_+$. 
Typically, 
these compact elements cover the ``operating range" 
of the controller to synthesize whereas the other elements
catch ``overflows".
Some finite subset $U' \subseteq U$ is chosen. 
Condition \ref{def:abstraction:frr}) in Definition \ref{def:abstraction}
is realized by overapproximating the attainable sets
$\varphi(\tau,\Omega, \bar u)$ for 
$(\Omega,\bar u) \in X'\times U'$
by hyper-rectangles \cite{KapelaZgliczynski09}.
Applying Algorithm \ref{alg:BellmanFord} 
to discrete abstractions is straightforward 
by observing that 
the algorithm remains correct if 
$\operatorname{pred}$ 
is replaced by a superset of it.
Such a set can be obtained for a sampled system 
by integrating the time-reverse dynamics 
\begin{equation}
\dot x(t)  = -f(x(t),u(t))
\end{equation}
and 
overapproximating attainable sets accordingly \cite{MacoveiciucReissig19}. 
\section{Examples}
\label{s:example}
We discuss a detailed problem about aerial firefighting 
in Section \ref{ss:firefighting} and compare the performance 
of our algorithm to the Dijkstra-like algorthm of \cite{MacoveiciucReissig19} in Section \ref{ss:landing}.
\subsection{Automated aerial firefighting}
\label{ss:firefighting}
Until now, only few scientific works discuss automated aerial firefighting.
The works \cite{ImdoukhEtAl17, QinEtAl16} focus on quadcopters and 
their properties for firefighting. 
The work \cite{HarikumarSenthilnathSundaram18} discusses 
a leader-follower firefighting strategy based on engineering experience.
At the same time, 
automated aerial firefighting would drastically increase efficiency
of the firefighting task 
while reducing risk for humans.
The reasons are divers: One main difficulty 
in extinguishing wildfires 
is that aerial firefighting 
in darkness is typically not possible 
for the sake of pilots' safety. 
So a considerable amount of time is lost.
In any case, 
firefighting pilots take a huge risk 
in those operations due to turbulence, 
smoke, other participating vehicles and the like. 
Therefore, firefighting unmanned aerial vehicles 
in combination with 
a sophisticated firefighting strategy 
would reduce risks for humans. 

As one next step towards automated 
aerial firefighting
we apply in this section 
our novel results of Section \ref{s:algorithm} 
to a simplified scenario of a wildfire:
An aircraft fights the fire
by releasing its water tank over the hot spot. 
It is not our purpose 
to present a fully realistic scenario 
that includes real problem data.
Nevertheless the presented scenario is scalable to real data 
and the full potential of symbolic controller synthesis
is also not exploited for the sake of a clear presentation.
For example,
uncertainties due to wind or
sensor noise could be 
easily taken into account 
\cite[Sect.~VI-B]{ReissigWeberRungger17} 
but would add 
some extra notation.
\subsubsection{Problem definition}
We consider a fixed-wing aircraft, 
where we model 
a) the planar motion of the aircraft, 
b) instantaneous weight loss due to water release.
In fact, we consider equations of motion given by \eqref{e:ode} 
with $f_1$ (``empty water tank"), respectively $f_2$ (``filled water tank"), 
in place of $f$. 
Here,
$f_\sigma \colon \mathbb{R}^4 \times \bar U \to \mathbb{R}^4$ with 
$\sigma \in \{1, 2\}$ is given by
\begin{equation}
\label{e:aircraft}
f_\sigma(x,u) = \begin{pmatrix}
x_4 \cdot \cos(x_3) \\
x_4 \cdot \sin(x_3) \\
m_\sigma^{-1} \cdot p_L \cdot x_4 \cdot \sin(u_2) \\
m_\sigma^{-1}(u_1 - p_D \cdot x_4^2)
\end{pmatrix},
\end{equation}
where $\bar U$ and the symbols in \eqref{e:aircraft} are explained in Tab.~\ref{tab:aircraft}. 
This point-mass model for a fixed-wing aircraft is 
widely used in literature, e.g. \cite[Sect.~3]{GloverLygeros04}.
\begin{figure*}
\centering
\input{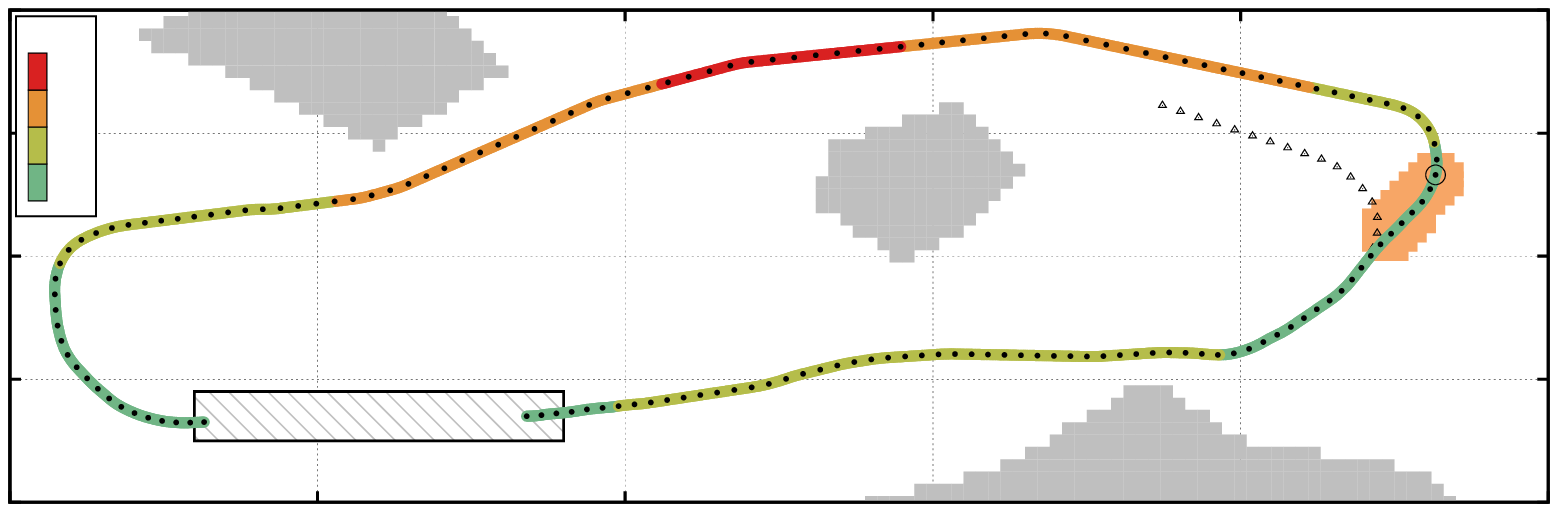}
\vspace*{-3.5em}
\caption{\label{fig:scenario}Aerial firefighting scenario from Section \ref{ss:firefighting}. A closed-loop trajectory is illustrated, which starts at 
$p_1 = (840,140,0^\circ,53)$ 
and ends at 
$p_3 \approx (319, 130, 2.43^\circ, 50.4)$. 
At the point $p_2$ ($\odot$) the controller for $\Pi_2$ hands over the control to the controller for $\Pi_1$. 
The trajectory segment indicated by the triangles belongs to a trajectory that would result if for both $\Pi_1$ and $\Pi_2$ the running cost be equal to $g_1$, i.e. without reward mechanism.
}
\end{figure*}%
\begin{table}
\centering
\begin{tabular}{|cl|}
\hline 
Symbol & Meaning and numerical values where applicable \\
\hline
\hline
$(x_1,x_2)$ & Planar position of aircraft \\
$x_3$ & Heading of aircraft \\
$x_4$ & Velocity of aircraft \\
\hline
$u_1$ & Thrust of aircraft \\
$u_2$ & Bank angle of aircraft \\
$\bar U$ & Admissible range of thrust and bank angle; \\
& $\bar U = \intcc{0, 18\!\cdot\!10^3} \times \intcc{-40^\circ , 40^\circ}$ \\
\hline
$m_1$ $[m_2]$ & Mass of aircraft without [with] its payload; \\
& $m_1 = 4250$, $m_2 = 6250$  \\
$p_D$ & Coefficient (rel. to drag) in aircraft dynamics; $p_D = 1.8$\\
$p_L$ & Coefficient (rel. to lift) in aircraft dynamics; $p_L = 85$\\
\hline
\end{tabular}
\caption{\label{tab:aircraft} Symbols used in \eqref{e:aircraft}. }
\vspace*{-1.5\baselineskip}
\end{table}%

The specification is to fly the aircraft to the fire 
after departure from the airfield, 
fly over the fire for some ``optimal" time and 
fly back to the airfield. 
Obstacles in the area must be avoided and 
the aircraft must be flown within allowed limits.
The relevant sets of the scenario
are given in Tab.~\ref{tab:specification}. 
See also Fig.~\ref{fig:scenario}.
\begin{table}
\centering
\begin{tabular}{|lll|}
\hline 
Symbol & Value & Meaning\\
\hline \hline
$X_\textrm{scen}$ & $\intcc{0,2500} \times \intcc{0,800}$ & Spatial mission area \\
$\bar X$ & $X_\textrm{scen} \times \mathbb{R} \times \intcc{50,85}$ & Operating range of controllers \\ \hline
$A_\textrm{rwy}$ & $\intcc{300,900} \times\intcc{100,180}$ & Runway of the airfield  \\
$A_\textrm{land}$ & $\intcc{-10^\circ,10^\circ} \times \intcc{50,55}$ & Admissible heading and ve- \\
& & locity of aircraft for landing \\
$A_\textrm{fire}$ & $\subseteq \mathbb{R}^2$, see Fig.~\ref{fig:scenario} & Water release region (fire)  \\
$A_\textrm{drop}$ & $\R \times \intcc{53,56}$ & Admissible heading and veloc- \\
& & ity of aircraft for water release \\
\hline
$A_\textrm{nofly}$ & $\intcc{320,880} \times\intcc{120,160}$ & Illegal aircraft states \\ & $\times \intcc{12^\circ, 348^\circ} \times \R$ & over runway\\
$A_\textrm{hill}$ & $\subseteq \mathbb{R}^2$, see Fig.~\ref{fig:scenario} & Spatial obstacle set \\
$A_\mathrm{a}$ & $A_\textrm{nofly} \cup (A_\textrm{hill} \times \R^2)$ & Overall obstacle set \\ 
\hline
\end{tabular}
\caption{\label{tab:specification} Sets defining the scenario. }
\vspace*{-2\baselineskip}
\end{table}%

We formalize this mission by two optimal control problems
$\Pi_i = (X,U,F_i,G_i,g_i)$, $i \in \{1,2\}$
which we are going to solve in succession. 
The optimal control problem $\Pi_1$
is the control task to fly from the fire back to 
the airfield with empty water tank. 
In particular, $(X, U, F_1)$ is the sampled system 
associated with $f_1$ and sampling time $\tau = 0.45$, and 
\begin{align*}
g_1(x,y,u) &= \begin{cases}
\infty , &  \textrm{if } y \in (\R^4 \setminus \bar X ) \cup A_\textrm{a} \\
\tau + u_2^2, & \text{otherwise}
\end{cases} \\
G_1(x) &= \begin{cases} 
\infty, & \textrm{if } x \notin A_\textrm{rwy} \times A_\textrm{land} \\
0, & \textrm{otherwise}
\end{cases}
\end{align*}
Loosely speaking, the optimal controller for $\Pi_1$ 
minimizes the time to arrive at the airfield 
but additionally favors small bank angles\footnote{In the definition of $g_1$, angles are taken in radians in the range $\intcc{-\pi,\pi}$.} and avoids obstacles. 
The controller terminates its action 
only if the aircraft is on ``final approach", which is enforced by $G_1$.

Optimal control problem $\Pi_2$
formalizes the control task 
to fly to the hot spot with filled water tank. 
In particular, 
$(X, U, F_2)$ is the sampled system associated with $f_2$ and $\tau$, 
\begin{equation*}
g_2(x,y,u) = \begin{cases}
-5\tau, & \textrm{if } y \in A_\textrm{fire} \times A_\textrm{drop} \\
g_1(x,y,u), & \textrm{otherwise}
\end{cases}
\end{equation*}
and $G_2 = V_1$, where $V_1$ is the value function of $\Pi_1$. 
The motivation for $g_2$ is to reward the aircraft for flying over the fire. The controller terminates its action at a beneficial state for flying back to the airfield. The latter behavior is due to the definition of $G_2$. 
\subsubsection{Auxiliary optimal control problems}
To solve $\Pi_1$ and $\Pi_2$
two auxiliary problems 
$\Pi_i' = (X',U',F'_i,G_i',g_i')$, 
$i \in \{1,2\}$ are defined 
in accordance with Section \ref{s:application}, 
where the ingredients are as follows. 
The ``abstract" state and input space 
satisfy 
$|X'| \approx 141.7 \cdot 10^6 $ and 
$|U'| = 35$, respectively. 
They are defined through subdividing the 
components of $X_\textrm{scen} \times \intcc{-\pi,\pi} \times \intcc{50,85}$ and $\bar U$
into $200\!\cdot\!70\!\cdot\!75\cdot\!135$ and $4\cdot6$ respectively, 
compact hyper-rectangles.
The running costs 
are defined by
\begin{align*}
g_1'(\Omega,\Omega',u) &= 
\begin{cases}
\infty, & \textrm{if } \Omega' \cap ( (\R^4 \setminus \bar X ) \cup A_\mathrm{a} ) \neq \emptyset \\
\tau + u_2^2, & \textrm{otherwise}
\end{cases} \\
g_2'(\Omega,\Omega',u) &= 
\begin{cases}
-5\tau, & \textrm{if } \Omega' \subseteq A_\textrm{fire} \times A_\textrm{drop} \\
g_1'(\Omega,\Omega',u), & \textrm{otherwise}
\end{cases}
\end{align*}
and the terminal costs by $G'_2 = V'_1$,
\begin{align*}
G_1'(\Omega) &= 
\begin{cases}
0, & \textrm{if } \Omega \subseteq A_\textrm{rwy}\times A_\textrm{land}  \\
\infty, & \textrm{otherwise}
\end{cases}
\end{align*}
where $V'_1$ is the value function of $\Pi_1'$.
\subsubsection{Experimental results}
\label{ss:experimentalresults}
Both $\Pi_1'$ and $\Pi_2'$ can be solved 
utilizing Algorithm \ref{alg:BellmanFord}
and near-optimal controllers for $\Pi_1$ on 
$A_\textrm{fire}\times A_\textrm{drop}$ and 
for $\Pi_2$ on 
$A_\textrm{rwy} \times A_\textrm{land}$ are found. 
A trajectory of the resulting closed loop is illustrated in Fig.~\ref{fig:scenario}. 
It is also important to note that the reward mechanism implemented by means of $g_2$ is indeed relevant to fly the aircraft a longer time over the fire. 
Disabling this mechanism, i.e. 
defining $\Pi_2$ with $g_1$ in place of $g_2$, results 
in the trajectory outlined by the triangles 
in Fig.~\ref{fig:scenario}. Hence, the hand-over command would follow 
immediately after reaching $A_\textrm{fire}$.

The performance of Algorithm \ref{alg:BellmanFord} 
shall be discussed by means of Fig.~\ref{fig:experimentalresults}.
The corresponding implementation is written in C and compiled for Linux.

Firstly,  Fig.~\ref{fig:experimentalresults}a indicates that the load 
of the random access memory can be regulated almost without restrictions.
Specifically, 
the used implementation stores 
all computed images of $F$ and $\operatorname{pred}$
until $68\%$ of RAM is consumed (predefined by user). 
After that, after every iteration (line \ref{alg:BF:endfor} in Algorithm \ref{alg:BellmanFord})
all transitions are deleted from memory
except those required in the next iteration. 
A temporary small loss of processing speed 
can be detected but the computation proceeds efficiently.
Fig.~\ref{fig:experimentalresults}b illustrates 
the great scaling property of the algorithm (and its implementation)
with respect to parallelization. 
\subsection{Aircraft landing maneuver}
\label{ss:landing}
We would like to compare the performance of
Algorithm~\ref{alg:BellmanFord} with the one of a recent, 
memory-efficient Dijkstra-like algorithm. 
To be specific, we apply our algorithm 
to the example considered in \cite[Sect.~V.B]{MacoveiciucReissig19}, 
which is a control problem about landing an aircraft DC-9 
with 3-dimensional dynamics. 
Algorithm~\ref{alg:BellmanFord} of this paper
needs 122 MB memory, which is 61\% less than the consumption
reported for \mbox{``Algorithm 2"} of \cite{MacoveiciucReissig19} (317 MB).
With one thread (resp. two threads) the computation terminates successfully in 
321 (resp. 255) seconds\footnote{Implementation and hardware as in Section \ref{ss:firefighting}.}. The authors of \cite{MacoveiciucReissig19}
report 320 seconds runtime, where only sequential computation is feasible.
\section{Conclusions}
\label{s:conclusion}
Our experimental results confirm the conclusions  
in \cite{KhaledZamani19} that 
the efficiency of symbolic controller synthesis 
can be drastically increased by utilizing algorithms
that can be executed in parallel. 
In addition, our second numerical example 
reveals that though Dijkstra-like algorithms
have a smaller time complexity
than our algorithm 
the ability of parallelization and limiting RAM usage
makes our algorithm more suitable for solving
complex control problems.
The aerial firefighting problem in Section \ref{ss:firefighting}
could not be solved, or could not be solved in a 
reasonable time, without the said properties.

In future work further techniques for 
an even higher degree of parallelization 
will be investigated, 
e.g. using graphical 
processing units. 
On such an architecture 
thousands of elementary operations 
can be executed in parallel. 
The challenge is to organize shared memory operations properly 
since they take a significant amount 
of runtime.
\begin{figure*}
\centering
\begin{minipage}{.64\textwidth}
\centering
\input{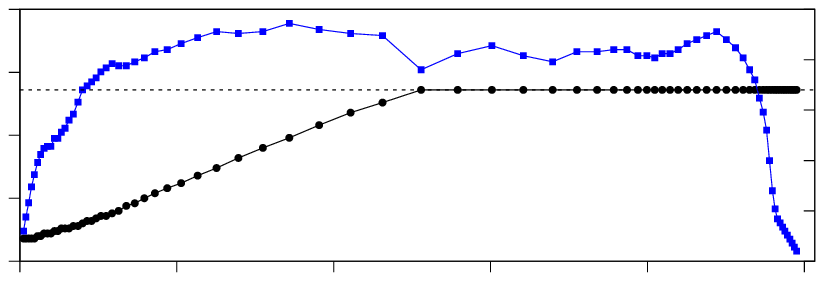}
\\
(a)
\end{minipage}%
\begin{minipage}{.35\textwidth}
\centering
\input{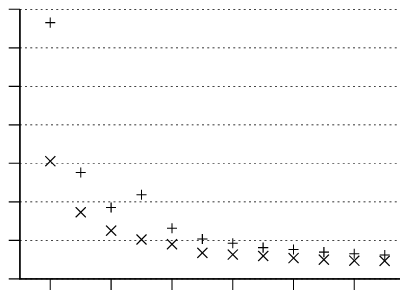}\\
(b)
\end{minipage}
\caption{\label{fig:experimentalresults}Performance analysis of Algorithm \ref{alg:BellmanFord} and its implementation applied to the optimal control problems in Section \ref{ss:firefighting}. All computations ran on up to 24 CPUs of type Intel Xeon E5-2697 v3 (2.60GHz) sharing 64 GB RAM. (a) Relation between memory use ($\bullet$) and processing speed ({\tiny{$\blacksquare$}}) when solving $\Pi'_2$ using 24 threads. 
The quantities are measured whenever the for-loop in Algorithm \ref{alg:BellmanFord} is exited (line \ref{alg:BF:endfor}). For RAM usage the system function
\texttt{getrusage} \cite{LinuxProgrammersManual} is used.
(b) Runtime in dependence of the number of threads that are used to solve problem 
$\Pi_1'$ ($\times$) and $\Pi_2'$ ($+$), respectively.}
\vspace*{-.8\baselineskip}
\end{figure*}
\section*{Acknowledgment}
The authors gratefully acknowledge the compute resources provided by the Leibniz Supercomputing Centre.
\bibliography{}
\end{document}